\newtheorem{theorem}{Theorem}
\newtheorem{proposition}[theorem]{Proposition}
\newenvironment{proof}[1][Proof]{\noindent\textbf{#1.} }{\ \rule{0.5em}{0.5em}}
\begin{document}
\title{A note on observables for counting trails and paths in graphs}
\author{Fotini Markopoulou}
\affiliation{Perimeter Institute for Theoretical Physics, Waterloo N2L 2Y5, ON Canada}
\author{Simone Severini}
\affiliation{Institute for Quantum Computing and Department of Combinatorics \&
Optimization University of Waterloo, Waterloo N2L 3G1, ON Canada}
\date{24 June 2008}
\keywords{enumeration; paths; cycles; observables; graphity models; Grassman variables}

\begin{abstract}
We point out that the total number of trails and the total number of paths of
given length, between two vertices of a simple undirected graph, are obtained
as expectation values of specifically engineered quantum mechanical
observables. Such observables are contextual with some background independent
theories of gravity and emergent geometry. Thus, we point out yet another
situation in which the mathematical formalism of a physical theory has some
computational aspects involving intractable problems.

\end{abstract}
\maketitle

\section{Introduction}

When the length is part of the input, counting trails and paths in graphs is
usually an expensive computational task. For example, counting the number of
Eulerian trails and Hamiltonian cycles are $\#P$ problems (see \cite{gw} and
\cite{v}, respectively).

In this note, we point out that the number of trails and the number of paths
having generic given length can be obtained as expectation values of
specifically engineered quantum mechanical observables. The observables arise
from an operational construction used to associate energy to graphs in certain
background indepedent models of gravity (see \cite{kms, kms1}). The
Hamiltonian of the system depends only on minimal information encoded in
graphs, like, for example, the degrees sequence and the length of trails. The
induced dynamics tends to maximize the number of trails of certain preferred
lengths during the time evolution of the system. It has been argued that such
models can exhibit a phase which describes a emergent geometry. We are
interested in remarking some computational aspects of a physical model and it
is out of our scope to propose any algorithm for $\#P$ problems.

The remainder of this article is structured as follows: in Section II we give
the necessary definitions. Section III considers trails; Section IV, paths.
Section V concludes the paper with a brief discussion.

\section{Counting trails and paths}

Let $G=(V,E)$ be a simple undirected graph, where $V(G)=\{1,...,n\}$. A
\emph{walk} of \emph{length} $l$ in $G$ is a non-empty sequence of vertices
$v_{1}v_{2}\cdots v_{l}v_{l+1}$, such that $\{v_{i},v_{i+1}\}\in E(G)$, for
every $i<l$. The vertices $v_{1}$ and $v_{l+1}$ are said to be the
\emph{initial} and \emph{final} vertex of the walk, respectively. If
$v_{i}=v_{l+1}$ then the walk is said to be \emph{closed}. If $v_{i}\neq
v_{l+1}$ then the walk is said to be \emph{open}. When this is the case, we
chose to omit the adjective. A \emph{trail} is a walk in which no edges are
repeated. A \emph{Eulerian trail} is a trail of including all edges. A graph
with a Euler trail is said to be \emph{Eulerian}. A \emph{path} is a trail in
which no vertices are repeated. A \emph{cycle} is a closed path. A
\emph{Hamiltonian cycle} is a cycle of length $n$, that is, a cycle including
all vertices of $G$. A graph with a Hamiltonian cycle is said to be
\emph{Hamiltonian}. See the book by Diestel \cite{die}, for a reference on the
concepts and terminology of graph theory.

The \emph{adjacency matrix} of $G$ is a binary $n\times n$ matrix, denoted by
$A(G)$, with $A(G)_{u,v}=1$ if $\{u,v\}\in E(G)$ and $A(G)_{u,v}=0$,
otherwise. Let $w(G,l,u,v)$ be the number of walks of length $l$ in $G$, with
initial vertex $u$ and final vertex $v$. It is well-known that
$w(G,l;u,v)=A(G)_{u,v}^{l}$, for all $u,v\in V(G)$, even if $u=v$. During our
discussion, it is useful to define a \emph{formal }adjacency matrix
$\widetilde{A}(G)$, by replacing each $A(G)_{u,v}=1$ with an independent
variable $e_{u,v}$, where $[e_{u,v},e_{w,z}]=0$, for all $u,v,w,z\in V(G)$.
For instance, for the $4$-cycle $C_{4}$,%
\[
A(C_{4})=\left(
\begin{array}
[c]{cccc}%
0 & 1 & 1 & 0\\
1 & 0 & 0 & 1\\
1 & 0 & 0 & 1\\
0 & 1 & 1 & 0
\end{array}
\right)
\]
and%
\[
\widetilde{A}(C_{4})=\left(
\begin{array}
[c]{cccc}%
0 & e_{12} & e_{13} & 0\\
e_{21} & 0 & 0 & e_{24}\\
e_{31} & 0 & 0 & e_{34}\\
0 & e_{42} & e_{43} & 0
\end{array}
\right)  .
\]
The walks of lengths $3$ between vertex $1$ and vertex $2$ are given by
$\widetilde{A}(C_{4})_{1,2}^{3}=e_{1,3}^{2}e_{1,2}+e_{1,2}e_{2,4}^{2}%
+e_{1,3}e_{3,4}e_{2,4}+e_{1,2}^{3}$ and then $w(C_{4},3;1,2)=4$. We denote by
$t(G,l;u,v)$ and $p(G,l;u,v)$ respectively the number of trails and paths of
length $l$ in $G$, with initial vertex $u$ and final vertex $v$.

\subsection{Trails}

Let $\mathcal{H}_{2}\cong\mathbb{C}^{2}$ be a two-dimensional Hilbert space
with the orthonormal basis $\{|0\rangle,|1\rangle\}$. Let $a$ and $a^{\dagger
}$ be operators obeying the relation
\[
\{a,a^{\dagger}\}\equiv aa^{\dagger}+a^{\dagger}a=I.
\]
These operators act on $\{|0\rangle,|1\rangle\}$ as follows: $a|0\rangle
=0|0\rangle$, $a^{\dagger}|0\rangle=|1\rangle$, $a|1\rangle=|0\rangle$ and
$a^{\dagger}|1\rangle=0|1\rangle$. The Hermitian combination $N=a^{\dagger}a$
is the so-called \emph{particle number operator} and it has the property that
\[
N|k\rangle=a^{\dagger}a|k\rangle=k|k\rangle,
\]
for $k=0,1$. Each $l$-th ($l\geq2$) normally ordered power of $N$ gives
$\colon N^{l}\colon|k\rangle=0|k\rangle$. For instance,%
\[
\colon N^{2}\colon|k\rangle=a^{\dagger}a^{\dagger}aa|k\rangle=0|k\rangle.
\]
This is equivalent to say $\colon N^{l}\colon$ only has zero eigenvalues. Let
us now define the space
\[
\mathcal{H}_{V^{2}}:=\bigotimes_{u,v\in V(G)}\left(  \mathcal{H}_{2}\right)
_{u,v},
\]
where $\dim\mathcal{H}_{V^{2}}=2^{\binom{n}{2}}$. Each pair $\{u,v\}$ is
associated to a space
\[
\left(  \mathcal{H}_{2}\right)  _{u,v}\equiv\text{ span}\{|0_{u,v}%
\rangle,|1_{u,v}\rangle\}.
\]
All vectors
\[
|x\rangle=\bigotimes_{u,v\in V(G)}|x_{u,v}\rangle,
\]
being $x_{u,v}\in\{0,1\}$ and $x\in\{0,1\}^{\binom{n}{2}}$ (\emph{i.e.}, the
set of binary strings of length $n(n-1)/2$) form an orthonormal basis of
$\mathcal{H}_{V^{2}}$. The state in $\mathcal{H}_{V^{2}}$ corresponding to the
graph $G$ is the basis state $|\psi_{G}\rangle$, in which $|x_{u,v}%
\rangle\equiv|1\rangle$ if $\{u,v\}\in E(G)$ and $|0\rangle$, otherwise. The
state $|\psi_{G}\rangle$ needs $n(n-1)/2$ qubits to be encoded. Operators
acting on the space $\mathcal{H}_{V^{2}}$ can be defined by making use of the
operators $a$ and $a^{\dagger}$ acting on the individual spaces $(\mathcal{H}%
_{2})_{u,v}$. In particular, it is possible to define number operators acting
on each copy of $(\mathcal{H}_{2})_{u,v}$. These operators are denoted by
$N_{u,v}$ and are defined by $N_{u,v}|x\rangle=x_{u,v}|x\rangle$, for
$x\in\{0,1\}^{\binom{n}{2}}$. Each operator $N_{u,v}$ returns the occupation
number $x_{u,v}$. More explicitly, $N_{u,v}$ is defined by
\[
N_{u,v}|x\rangle=1\otimes1\otimes\cdots\otimes1\otimes a^{\dagger}%
a\otimes1\otimes\cdots\otimes1|x\rangle=x_{u,v}|x\rangle.
\]
Since the operators $N_{u,v}$ act on different subsystems of $\mathcal{H}%
_{V^{2}}$, they all commute with each other.

Now, let us define a matrix $\mathbf{N}$ with elements
\[
\mathbf{N}_{u,v}=\left\{
\begin{array}
[c]{ll}%
N_{u,v}, & \text{{if}}\,u\neq v\text{;}\\
0, & \text{{otherwise}.}%
\end{array}
\right.
\]
Note that the matrix $\mathbf{N}$ is not an operator on $\mathcal{H}_{V^{2}}$
in the usual sense; it is the elements of $\mathbf{N}$ that act naturally on
$\mathcal{H}_{V^{2}}$. Thus, the action of $\mathbf{N}$ on a state should be
understood as occurring entry-wise. The evaluation of the matrix $\mathbf{N}$,
using the state $|\psi_{G}\rangle$, gives the same entries of the adjacency
matrix, that is,
\[
\langle\psi_{G}|\mathbf{N}_{u,v}|\psi_{G}\rangle:=\langle\psi_{G}|N_{u,v}%
|\psi_{G}\rangle=A(G)_{u,v}.
\]

Powers of $\mathbf{N}$ can be defined recursively as $\mathbf{N}%
^{l}=\mathbf{N}\mathbf{N}^{l-1}$. The entries of $\mathbf{N}^{l}$ are sums of
sequences of number operators acting on different subspaces. By considering
the expectation values of the normally ordered entries, we can state the
following proposition: (Recall that a trail is a walk in which no edges are repeated.)

\begin{proposition}
\label{pro}Given a graph $G$, the total number of trails of length $l$ in $G$,
with initial vertex $u$ and final vertex $v$ is
\begin{equation}
t(G,l;u,v)=\langle\psi_{G}|\colon\mathbf{N}_{u,v}^{l}\colon|\psi_{G}\rangle.
\label{Pnonover}%
\end{equation}

\end{proposition}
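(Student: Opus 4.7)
The plan is to expand $\mathbf{N}^{l}_{u,v}$ as an explicit sum over walks and then use the fact that normal ordering kills any on-site monomial of degree $\geq 2$ on a qubit.

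First, I would unfold the matrix power using the recursive definition:
\[
\mathbf{N}_{u,v}^{l} \;=\; \sum_{v_{1},\dots,v_{l-1}\in V(G)} N_{u,v_{1}} N_{v_{1},v_{2}} \cdots N_{v_{l-1},v},
\]
so that the summands are in bijection with walks $W = (u,v_{1},\dots,v_{l-1},v)$ of length $l$ from $u$ to $v$. Because the $N_{w,z}$ act on pairwise distinct tensor factors, they commute, and each summand can be regrouped by edge:
\[
N_{u,v_{1}} N_{v_{1},v_{2}} \cdots N_{v_{l-1},v} \;=\; \prod_{e \in V^{(2)}} N_{e}^{m_{e}(W)},
\]
where $m_{e}(W)$ is the number of times the (unordered) edge $e$ appears in $W$.

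Next I would apply the normal ordering. Since operators on different subsystems commute, the symbol $\colon\cdot\colon$ factors over the tensor factors, so
\[
\colon N_{u,v_{1}} \cdots N_{v_{l-1},v} \colon \;=\; \prod_{e} \colon N_{e}^{m_{e}(W)} \colon \;=\; \prod_{e} (a_{e}^{\dagger})^{m_{e}(W)} a_{e}^{m_{e}(W)}.
\]
The key observation, stated in the paper, is that on each $(\mathcal{H}_{2})_{e}$ one has $(a_{e}^{\dagger})^{k} a_{e}^{k} = 0$ for $k\geq 2$. Therefore a summand survives normal ordering if and only if $m_{e}(W) \leq 1$ for every edge $e$, i.e.\ the walk $W$ is a trail; in that case the ordered product coincides with $\prod_{e \in E(W)} N_{e}$.

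Finally I would evaluate on the product state $|\psi_{G}\rangle = \bigotimes_{e} |x_{e}\rangle$. Because this state factorizes and the surviving operators act on distinct tensor factors, the expectation factorizes as well:
\[
\langle \psi_{G}|\textstyle\prod_{e \in E(W)} N_{e}|\psi_{G}\rangle \;=\; \prod_{e \in E(W)} A(G)_{e},
\]
which equals $1$ exactly when every edge of the trail $W$ belongs to $E(G)$, and $0$ otherwise. Summing over the walks indexing the nonzero surviving terms counts precisely the trails of length $l$ from $u$ to $v$ in $G$, yielding $t(G,l;u,v)$.

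The only subtle point, and the one I would write out most carefully, is the justification that $\colon\cdot\colon$ really does factor across tensor factors so that the product-of-walks expansion is legitimate; once that bookkeeping is in place the computation reduces to the two elementary facts that $:N^{k}: = 0$ on $\mathcal{H}_{2}$ for $k\geq 2$ and that $\langle \psi_{G}|N_{e}|\psi_{G}\rangle = A(G)_{e}$.
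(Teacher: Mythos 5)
Your argument is correct and follows essentially the same route as the paper's sketch: expand $\mathbf{N}_{u,v}^{l}$ over walks, use $\colon N^{k}\colon=0$ for $k\geq 2$ to discard walks with repeated edges, and use $\langle\psi_{G}|N_{e}|\psi_{G}\rangle=A(G)_{e}$ to discard walks using non-edges of $G$. Your version simply makes the bookkeeping (the explicit sum over intermediate vertices and the factorization of the normal ordering across tensor factors) more precise than the paper's two bullet points.
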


\begin{proof}
(Sketch) Keeping in mind the formal adjacency matrix $\widetilde{A}(G)$, we
need to point out the following two facts only:

\begin{itemize}
\item If a product of number operators in one of the terms $\mathbf{N}%
_{u,v}^{l}$ contains a member acting on a pair of nonadjacent vertices then
the operator yields zero and that term does not contribute to the expectation value.

\item When a term in $\mathbf{N}_{u,v}^{l}$ contains more than one copy of a
number operator acting on a particular edge, that term also does not
contribute to the expectation value, because $\colon N^{l}\colon
|k\rangle=0|k\rangle$.
\end{itemize}

It follows that the only combinations of number operators that do not give a
vanishing contribution to $t(G,l,u,v)$ correspond to trails with initial
vertex $u$ and final vertex $v$. Thus, Eq. (\ref{Pnonover}) gives this number
of trails as an expectation value of a quantum mechanical operator.
\end{proof}

\bigskip

Notice that $\colon\mathbf{N}_{u,u}^{m}\colon$ counts the number of Eulerian
trails in $G$, if $|E(G)|=m$.

It is noteworthy to remark that the logarithm of the dimension of the space
$\mathcal{H}_{V^{2}}$ is polynomial in the number of vertices. Also, notice
that the operators $\mathbf{N}_{u,v}^{l}$ are independent of $G$. For the
purpose of counting trails in a specific graph, a similar procedure may be
applied taking a Hilbert space
\[
\mathcal{H}_{E}:=\bigotimes_{\{u,v\}\in E(G)}(\mathcal{H}_{2})_{u,v},
\]
where $\dim\mathcal{H}_{E}=2^{|E(G)|}$. In this case, the matrix of operators
$\mathbf{N}$ needs to be slightly modified so that some of its entries are
zero from the beginning, rather than number operators acting on empty states.

\bigskip

Proposition \ref{pro} is based on the equation $\colon N^{l}\colon
|k\rangle=0|k\rangle$. It may be worth remarking that there is way of counting
trails without making use of normal ordering. This can be done by defining the
matrices
\[
\mathbf{D}_{u,v}=\left\{
\begin{array}
[c]{ll}%
a_{u,v}, & \text{if }u\neq v;\\
0, & \text{otherwise.}%
\end{array}
\right.
\]
It is in the same spirit of matrix $\mathbf{N}_{u,v}$, but the entries are
annihilation operators, $a_{u,v}$, rather than number operators $N_{u,v}$.
This matrix of operators is not Hermitian, \emph{i.e.}, $\mathbf{D}^{\dagger
}\neq\mathbf{D}$. Here the Hermitian conjugate is defined entry-wise by
$(\mathbf{D}^{\dagger})_{u,v}=\left(  \mathbf{D}_{u,v}\right)  ^{\dagger}$. It
can be shown that%
\begin{align*}
t(G,l;u,v)  &  =\langle\psi_{G}|\colon\mathbf{N}_{u,v}^{l}\colon|\psi
_{G}\rangle\\
&  =\langle\psi_{G}|\,\left(  \mathbf{D}_{u,v}^{l}\right)  ^{\dagger
}\mathbf{D}_{u,v}^{l}\,|\psi_{G}\rangle.
\end{align*}

\subsection{Example}

We consider the $4$-cycle $C_{4}$: the set of vertices is $V(C_{4}%
)=\{1,2,3,4\}$; the set of edges is $E(C_{4}%
)=\{\{1,2\},\{1,3\},\{2,4\},\{3,4\}\}$. Since $|V(C_{4})|=4$, we associated to
this graph an Hilbert space $\mathcal{H}_{V^{2}}$ of dimension $2^{6}=64$.
Specifically,%
\[
\mathcal{H}_{V^{2}}=\left(  \mathcal{H}_{2}\right)  _{1,2}\otimes\left(
\mathcal{H}_{2}\right)  _{1,3}\otimes\cdots\otimes\left(  \mathcal{H}%
_{2}\right)  _{3,4}.
\]
The state associated to $C_{4}$ is
\[
|\psi_{G}\rangle=|110011\rangle.
\]
Regarding the operators $N_{u,v}$, we have, for instance%
\[
N_{1,2}|110011\rangle=1|110011\rangle
\]
and
\[
N_{1,4}|110011\rangle=0|110011\rangle.
\]
In fact,
\[
\langle110011|N_{1,2}|110011\rangle=A(C_{4})_{1,2}=1
\]
and
\[
\langle110011|N_{1,4}|110011\rangle=A(C_{4})_{2,4}=0
\]
Observe that $\widetilde{A}(C_{4})_{1,2}^{3}=e_{1,3}^{2}e_{1,2}+e_{1,2}%
e_{2,4}^{2}+e_{1,3}e_{3,4}e_{2,4}+e_{1,2}^{3}$. For the vertices $1$ and $2$,
we have,%
\begin{align*}
\langle\psi_{G}|\colon\mathbf{N}_{1,2}^{3}\colon|\psi_{G}\rangle &
=\langle\psi_{G}|\colon N_{1,3}^{2}N_{1,2}\colon|\psi_{G}\rangle\\
&  +\langle\psi_{G}|\colon N_{1,2}N_{2,4}^{2}\colon|\psi_{G}\rangle\\
&  +\langle\psi_{G}|\colon N_{1,3}N_{3,4}N_{2,4}\colon|\psi_{G}\rangle\\
&  +\langle\psi_{G}|\colon N_{1,2}^{3}\colon|\psi_{G}\rangle\\
&  =\langle\psi_{G}|\colon N_{1,3}N_{3,4}N_{2,4}\colon|\psi_{G}\rangle\\
&  =1
\end{align*}
Indeed, $t(C_{4},3;1,2)=p(C_{4},3;1,2)=1$, something that $\colon
\mathbf{N}_{1,2}^{3}\colon$ is able to detect even if $w(C_{4},3;1,2)=4$.

\subsection{Paths}

In this section, our working space is $\mathcal{H}_{V}:=\bigotimes_{v\in
V(G)}(\mathcal{H}_{2})_{v}$, where $\dim\mathcal{H}_{V}=2^{n}$, given that
$|V(G)|=n$. This is the space usually assigned to networks of spin $1/2$
quantum mechanical particles. The space $(\mathcal{H}_{2})_{v}$ is associated
to the vertex $v$ and $\left(  \mathcal{H}_{2}\right)  _{v}\equiv$
span$\{|0_{v}\rangle,|1_{v}\rangle\}$. All vectors $|x\rangle=\bigotimes_{v\in
V(G)}|x_{v}\rangle$, being $x_{v}\in\{0,1\}$ and $x\in\{0,1\}^{n}$, form an
orthonormal basis of $\mathcal{H}_{V}$. The state in $\mathcal{H}_{V}$
corresponding to the graph $G$ is the basis state $|11...1\rangle
=\bigotimes_{n}|1\rangle$. As we have done in the previous subsection, we can
define number operators acting on each $(\mathcal{H}_{2})_{v}$. These
operators are denoted by $N_{v}$.

Let $\mathbf{M}$ be a matrix of operators with entries defined as follows:
\[
\mathbf{M}_{u,v}=\left\{
\begin{array}
[c]{ll}%
N_{\mathrm{max}(u,v)}, & \text{if}\,\{u,v\}\in E(G)\text{ and }v>u\text{;}\\
N_{\mathrm{min}(u,v)}, & \text{if }\{u,v\}\in E(G)\text{ and }u>v\text{;}\\
0, & \text{otherwise.}%
\end{array}
\right.
\]
As in the previous subsection, the elements of $\mathbf{M}$ are still number
operators, but this time acting on $\mathcal{H}_{V}$. However, unlike the
adjacency matrix, $\mathbf{M}$ is not symmetric. Powers of $\mathbf{M}$ are
defined via matrix multiplication, for example, by the recursion
$\mathbf{M}^{l}=\mathbf{M}\mathbf{M}^{l-1}$. Recall that a path is a trail in
which no vertices are repeated.

\begin{proposition}
Given a graph $G$, the total number of paths of length $l$ in $G$, with
initial vertex $u$ and final vertex $v$ is
\[
p(G,l;u,v)=\langle11...1|\colon\mathbf{M}_{u,v}^{l}\colon|11...1\rangle.
\]

\end{proposition}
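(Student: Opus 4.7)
The plan is to mirror the proof of Proposition \ref{pro}, with number operators on vertex subspaces playing the role previously played by number operators on edge subspaces. First I would expand the matrix power by multilinearity of matrix multiplication,
\[
(\mathbf{M}^{l})_{u,v}=\sum_{w_{1},\ldots,w_{l-1}}\mathbf{M}_{u,w_{1}}\mathbf{M}_{w_{1},w_{2}}\cdots\mathbf{M}_{w_{l-1},v},
\]
where the sum ranges over all $(l-1)$-tuples of intermediate vertices. From the case analysis defining $\mathbf{M}$, each factor $\mathbf{M}_{w_{i-1},w_{i}}$ vanishes unless $\{w_{i-1},w_{i}\}\in E(G)$, in which case it is a single number operator acting on one vertex subspace of $\mathcal{H}_{V}$. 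Consequently the nonzero summands are in bijection with walks of length $l$ from $u$ to $v$, and each such walk contributes a product of $l$ number operators on the vertex spaces.

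The next step mirrors the trails argument: the identity $\colon N^{k}\colon|1\rangle=0$ for $k\geq2$, together with the fact that number operators acting on different subspaces commute, implies that
\[
\langle 11\ldots1|\colon N_{x_{1}}N_{x_{2}}\cdots N_{x_{l}}\colon|11\ldots1\rangle
\]
equals $1$ if the indices $x_{1},\ldots,x_{l}$ are pairwise distinct and vanishes otherwise. Therefore, after normal ordering and taking the expectation value in the all-ones state $|11\ldots1\rangle$, only those walks whose associated sequence of tracked vertices contains no repetitions can contribute, and each such walk contributes exactly $1$.

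The remaining task is to verify that the $\min/\max$ prescription in the definition of $\mathbf{M}$ has precisely the effect of labelling the walk in such a way that ``no repetition in the tracked vertices'' coincides with ``no repeated vertex along the walk,'' i.e., the walk is a path from $u$ to $v$. This combinatorial bookkeeping is the main obstacle: one has to check case by case that the asymmetric choice between $N_{\min(u,v)}$ and $N_{\max(u,v)}$ depending on the order of the indices is exactly what is needed so that the multiset of tracked number-operator indices along a walk captures every vertex visited by the walk, with the distinctness of the former corresponding precisely to the definition of a path. Once this identification is in hand, the surviving terms sum to $p(G,l;u,v)$.
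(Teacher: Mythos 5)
Your strategy mirrors the paper's sketch, but you explicitly leave the decisive step --- that distinctness of the tracked number-operator indices coincides with the walk being a path --- as an unverified ``remaining task,'' and that is exactly where the argument breaks down. Unwind the $\min/\max$ case split: if $v>u$ then $N_{\mathrm{max}(u,v)}=N_{v}$, and if $u>v$ then $N_{\mathrm{min}(u,v)}=N_{v}$, so in both branches $\mathbf{M}_{w,w'}=N_{w'}$, i.e.\ only the \emph{second} vertex of each step is tracked. The operator attached to a walk $u=w_{0}w_{1}\cdots w_{l}=v$ is therefore $N_{w_{1}}N_{w_{2}}\cdots N_{w_{l}}$, and the initial vertex $u$ never appears. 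Normal ordering consequently enforces only that $w_{1},\ldots,w_{l}$ are pairwise distinct; it cannot annihilate a walk that returns to $u$ at an interior step. The paper's own example already exhibits this: in $C_{4}$ with $E(C_{4})=\{\{1,2\},\{1,3\},\{2,4\},\{3,4\}\}$, the walk $1\to3\to1\to2$ contributes $\colon N_{3}N_{1}N_{2}\colon$, which survives and yields $1$, so $\langle 11\ldots1|\colon\mathbf{M}_{1,2}^{3}\colon|11\ldots1\rangle=2$ while $p(C_{4},3;1,2)=1$ (similarly, in $K_{3}$ one gets expectation $1$ for $l=3$ although no path of length $3$ exists). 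So the identification you postponed is not routine bookkeeping: as literally stated it is false for $u\neq v$, and no case-by-case inspection of the $\min/\max$ prescription will rescue it.

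For fairness, the published sketch has the same lacuna --- its assertion that ``a term in which the same vertex appears more than once does not contribute'' is true only of the tracked vertices $w_{1},\ldots,w_{l}$ --- so your instinct to isolate this step as the crux was sound; a complete proof, however, must repair it rather than defer it. Two natural repairs: for $u=v$ the tracked list $w_{1},\ldots,w_{l-1},u$ does include the basepoint, and its distinctness is equivalent to the closed walk being a cycle through $u$, so the closed case goes through by your argument; for $u\neq v$ one can insert the missing factor by hand, e.g.\ $p(G,l;u,v)=\langle 11\ldots1|\colon N_{u}\,\mathbf{M}_{u,v}^{l}\colon|11\ldots1\rangle$, since then the tracked multiset is $\{u,w_{1},\ldots,w_{l}\}$ and normal ordering kills precisely the walks that are not paths.
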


\begin{proof}
(Sketch) The operators $\mathbf{M}_{u,v}^{l}$ are sums of products of number
operators. Since the entries in $\mathbf{M}$ are nonzero only in the positions
where the adjacency matrix is nonzero, each of the terms in $\mathbf{M}%
_{u,v}^{l}$ also correspond to walks. Due to the normal ordering convention, a
term in which the same vertex appears more than once does not contribute to
the expectation value. Thus, $\langle\varphi_{G}|\colon\mathbf{M}_{u,v}%
^{l}\colon|\varphi_{G}\rangle$ has the desired interpretation.
\end{proof}

\bigskip

A similar construction could be made up with a \textquotedblleft symmetric
version\textquotedblright\ of $\mathbf{M}$.

With respect to the Hilbert space $\mathcal{H}_{V}$, we define a matrix of
operators $\mathbf{F}$ with entries
\[
\mathbf{F}_{u,v}=\left\{
\begin{array}
[c]{ll}%
a_{\mathrm{max}(u,v)} & \text{if}\,\{u,v\}\in E(G)\text{ and }v>u\text{;}\\
a_{\mathrm{min}(u,v)} & \text{if }\{u,v\}\in E(G)\text{ and }u>v\text{;}\\
0 & \text{otherwise.}%
\end{array}
\right.
\]
Notice that $\mathbf{F}_{u,v}$ depends on $E(G)$ and thus on the graph $G$.
Then, the transition amplitude
\[
p(G,l;u,u)=\langle00...0|\mathbf{F}_{u,u}^{l}|11...1\rangle,
\]
is the number of cycles of length $l$ containing the vertex $u$. Taking $l=n$,
we have $p(G,n;u,u)>0$ if and only if the graph $G$ used to construct
$\mathbf{F}_{u,u}$ is Hamiltonian.

\section{Conclusion}

For the purpose of counting trails and paths, we have described quantum
observables in Hilbert spaces whose logarithm of the dimension is a polynomial
in the number of vertices. The numbers of trails and paths can be obtained as
expectation values of these observables. The states involved are the pure
states $|11...1\rangle$ and $|\psi_{G}\rangle$. These states can be prepared
efficiently. While it is clear that the observables raise no issues about
uncertainty, it is most likely that these are not efficiently implementable in
a quantum computer (\emph{e.g.}, by phase estimation). The reason behind this
thought is based on the fact that our observables are used to approach $\#P$
problems. Despite this, it may still be instructive to describe their form for
special classes of graphs, to determine their complexity, and to describe the
physics required for the implementation.

It is valuable to point out that the literature contains so far a number of
examples of quantum observables for solving computational task, which are
either not known to be efficiently implementable or implementable with poor
accuracy. Among these, observables for the graph isomorphism problem \cite{e}
and for the permanent \cite{t} (see also \cite{v1}). It has also be shown that
some \emph{mathematically well-defined }observables allow to solve the halting
problem, in contradiction with the Church-Turing thesis \cite{n}. Recall that
the Church-Turing thesis asserts that every function which can be computed by
what we would naturally regard as an algorithm is a computable function, and
\emph{viz.}

In the present notes, we have highlighted yet another situation in which the
mathematical formalism of a physical theory has some computational aspects
involving intractable problems. On the other hand, the computational
complexity side may suggest some potential limitations on the physical picture
(see \cite{aa}, for a detailed survey on these ideas) -- that is, the physical
picture is simply not plausible. -- With the same perspective of \cite{l}, it
is legitimate to study the properties of background independent models of
gravity and emergent geometry as computational devices (as cellular automata,
for instance). Indeed, in the model studied in \cite{kms}, the Hamiltonian of
the system keeps track of trails of given lengths. Still this is inline with
natural phenomena of self-organization at the microscopic scale. For example,
the folded 3-dimensional conformation of a protein is believed to be its
lowest free energy state. The 3-dimensional models describing the folding
process of a protein as free energy minimization problems are $NP$-hard (see,
\emph{e.g.}, \cite{c}). Finally, since the entries of the observables
described in this paper can be seen as Grassmann numbers (because
corresponding to fermionic operators), the observables may have some analogy
to certain matrix model (see, \emph{e.g.}, \cite{s}), where the partition
function is given by their weighted sums.

\bigskip

\noindent\emph{Acknowledgments. }We thank Tomasz Konopka for important
discussion, Andrew Childs and Peter H\o yer for their comments. This research
was conducted while the authors were at the Massachusetts Institute of
Technology. Financial support by fqxi is gratefully acknowledged.


\begin{thebibliography}{99}                                                                                               %


\bibitem {aa}S. Aaronson, NP-complete problems and physical reality, ACM
SIGACT News (Guest Column), Volume 36, Issue 1 (March 2005), Pages: 30--52. arXiv:quant-ph/0502072v2

\bibitem {gw}G. R. Brightwell, P. Winkler, Note on Counting Eulerian Circuits,
CDAM Research Report LSE-CDAM-2004-12. arXiv:cs/0405067v1 [cs.CC]

\bibitem {c}P. Crescenzi, D. Goldman, C. Papadimitriou, A. Piccoboni, M.
Yannakakis, On the complexity of protein folding, \emph{J. Comput Biol.} 5
(1998), 423--466.

\bibitem {die}R. Diestel, \emph{Graph theory }(2nd ed.), Graduate Texts in
Mathematics, Springer-Verlag, New York, 2000. Published electronically at ftp://math.uni-hamburg.de/pub/unihh/math/books/diestel.

\bibitem {e}M. Ettinger, P. Hoyer, A Quantum Observable for the Graph
Isomorphism Problem. LA-UR-99-179. arXiv:quant-ph/9901029v1

\bibitem {kms}T. Konopka, F. Markopoulou, S. Severini, Quantum Graphity: a
model of emergent locality, \emph{Phys. Rev. D}. \textbf{77}, 104029 (2008).
arXiv:0801.0861v1 [hep-th]

\bibitem {kms1}T. Konopka, F. Markopoulou, L. Smolin, Quantum Graphity. arXiv:hep-th/0611197v1

\bibitem {l}S. Lloyd, \emph{Programming the Universe: A Quantum Computer
Scientist Takes On the Cosmos}, Alfred A. Knopf, 2006.

\bibitem {n}M. A. Nielsen, Computable Functions, Quantum Measurements, and
Quantum Dynamics, \emph{Phys. Rev. Lett.} \textbf{79}, 2915--2918 (1997). arXiv:quant-ph/9706006v1

\bibitem {s}G. W. Semenoff, J. R. Szabo, Fermionic matrix models,
\emph{Internat. J. Modern Phys. A} \textbf{12} (1997), no. 12, 2135--2291.

\bibitem {t}L. Troyansky and N. Tishby, Permanent Uncertainty: On the Quantum
computation of the determinant and permanent of a matrix. \textquotedblleft
Proceedings of PhysComp96\textquotedblright\ (Boston, November 1996).

\bibitem {v}L. G. Valiant, The complexity of enumeration and reliability
problems, \emph{SIAM J. Comput.} \textbf{8} (1979) 410--421.

\bibitem {v1}L. G. Valiant, Quantum computers that can be simulated
classically in polynomial time. \emph{Proceedings of the Thirty-Third Annual
ACM Symposium on Theory of Computing, 114--123, ACM, New York, 2001}.
\end{thebibliography}
\end{document}